\documentclass[12pt,reqno]{amsart}

\usepackage[margin=1in]{geometry}
\usepackage{amsmath,amssymb,amsthm}
\usepackage{mathtools}
\usepackage{microtype}
\usepackage{enumitem}
\usepackage{hyperref}
\usepackage[nameinlink,capitalize]{cleveref}
\usepackage{tikz}

\hypersetup{
    colorlinks=true,
    linkcolor=blue,
    citecolor=blue,
    urlcolor=blue
}

\numberwithin{equation}{section}

\newtheorem{theorem}{Theorem}[section]

\newtheorem{lemma}[theorem]{Lemma}
\newtheorem{corollary}[theorem]{Corollary}
\newtheorem{remark}[theorem]{Remark}
\newtheorem{conjecture}{Conjecture}
\newtheorem{definition}{Definition}
\newtheorem{openproblem}{Open Problem}
\newtheorem{example}{Example}

\newcommand{\T}{\mathbb T}
\newcommand{\Z}{\mathbb Z}

\newcommand{\R}{\mathbb R}
\newcommand{\A}{\mathcal A}
\newcommand{\M}{\mathcal M}

\newcommand{\AM}{\mathcal A_M}
\newcommand{\esssup}{\operatorname*{ess\,sup}}

\newcommand{\AC}{{\rm AC\,}}

\title{$L^\infty$ Variational Approximation of the Aubry Set}

\author[H. V. Tran, Y. Yu]{Hung V. Tran, Yifeng Yu}

\date{}

\thanks{
H. V. Tran is partially supported by NSF grant DMS-2348305. 
}

\address[H. V. Tran]
{
Department of Mathematics, 
University of Wisconsin-Madison, Van Vleck Hall, 480 Lincoln Drive, Madison, Wisconsin 53706, USA}
\email{hung@math.wisc.edu}

\address[Y. Yu]
{
Department of Mathematics, 
University of California at Irvine, 
California 92697, USA}
\email{yifengy@uci.edu}

\keywords{Aubry set; $L^\infty$ variational problem; viscosity solutions; weak KAM theory}

\subjclass[2020]{35B10, 35B40, 35F21, 49L25}

\begin{document}

\begin{abstract}
Let $H\in C^\infty(\mathbb R^n\times\mathbb T^n)$ be a periodic Tonelli Hamiltonian with critical value $c$.
For each $k\in\mathbb N$, let $u_k$ be the normalized minimizer of the variational functional introduced by Evans \cite{Evans03},
\[
    I_k[w]=\int_{\mathbb T^n} e^{kH(Dw,x)}\,dx,
    \qquad \int_{\mathbb T^n}w\,dx=0.
\]
If $u_\infty$ is a uniform limit of a subsequence of $\{u_k\}$ and the Mather quotient $(\mathcal A_M,\delta_M)$ satisfies
$\mathcal H^1(\mathcal A_M,\delta_M)=0$, then $u_\infty$ is a critical subsolution that is strict outside $\A$ and 
\[
    \mathcal A
    =
    \{x\in\mathbb T^n\,:\,Du_\infty(x)\ \text{exists and }H(Du_\infty(x),x)=c\}=\{x\in\mathbb T^n\,:\,u_\infty(x)=u_-(x)\},
\]
where $\mathcal A$ is the projected Aubry set and $u_-$ is the backward weak KAM solution associated with $u_\infty$. 
In particular, by the theorem of Fathi--Figalli--Rifford \cite{FFR09}, this conclusion holds for all smooth Tonelli Hamiltonians on $\mathbb T^n$ when $n\leq3$. 
This characterization also suggests  a natural numerical localization principle for approximating the entire Aubry set through near-contact sets between \(u_k\) and its large-time backward Lax--Oleinik evolution.
\end{abstract}

\maketitle

\section{Introduction}

Evans in \cite{Evans03} introduced an approximate variational principle for weak KAM theory. 
Precisely speaking, consider a periodic Tonelli Hamiltonian \(H=H(p,x)\in C^\infty(\R^n\times \T^n)\). 
Minimize
\begin{equation}\label{eq:Ik}
    I_k[w]
    :=
    \int_{\T^n} e^{kH(Dw,x)}\,dx \qquad \text{subject to} \qquad \int_{\T^n}w\,dx=0
\end{equation}
and then let \(k\to\infty\).  
The exponential is a soft maximum:
\[
    \frac1k\log I_k[w]
    \longrightarrow
    \esssup_{\T^n}H(Dw,x).
\]
The approximation is therefore naturally related both to weak KAM theory and to the \(L^\infty\) variational problems.  See \cref{sec:pre} for  background and definitions on these two subjects and their connections. We also would like to mention that the same approximation has an exact interpretation as a stationary first-order mean-field game with logarithmic coupling \cite{GY20}.

Let \(u_k\) minimize \eqref{eq:Ik} and suppose that \(u_k\to u_\infty\) uniformly, up to a subsequence if necessary. 
It was proved in \cite{Evans03} that $u_\infty\in W^{1,\infty}(\T^n)$ is a subsolution to 
\begin{equation}\label{eq:cell-sub}
H(Du_\infty,x)=c \qquad \text{in $\mathbb T^n$}.
\end{equation}
Here $c$ is the critical value associated to $H$. 
Moreover, $u_\infty$ is an absolute minimizer for $H$ and is a viscosity solution to the following Aronsson equation
\begin{equation}\label{eq:Aronsson}
A_H(u)=H_{p_i}(Du,x)H_{p_j}(Du,x)u_{x_ix_j}+H_{x_i}(Du,x)H_{p_i}(Du,x)=0.
\end{equation}

For convenience, following the terminology of \cite{FathiSiconolfi04},
we call any subsolution of \eqref{eq:cell-sub} a \emph{critical subsolution}. For any critical subsolution,  the projected Aubry set \(\A\) is the obstacle to strict inequality (see Section \ref{sec:pre}).  
Accordingly,  it is natural to think that Evans' variational algorithm might provide an approach to detect \(\A\). 
For any critical subsolution,  write

\begin{equation}\label{eq:Du}
    D_u
    :=
    \bigl\{
       x\in\T^n\,:\,
       Du(x)\ \text{exists and }H(Du(x),x)=c
    \bigr\}.
\end{equation}
    If $u$ is a periodic absolute minimizer for $H$, then owing to \cite[Theorem 3.4]{Yu07},
\[
D_u= \{x\in\mathbb T^n\,:\,u(x)=u_-(x)\}.
\]
Here $u_-$ is the backward weak KAM solution associated with $u$.

This led to the following conjecture in \cite{Yu07}.

\begin{conjecture}[Conjecture 3.9 in \cite{Yu07}]\label{conj:original}

\[
    D_{u_\infty}=\A.
\]
\end{conjecture}

The one-dimensional case was already known in \cite{Yu07}; related one-dimensional selection questions for the associated Mather measures were studied further in \cite{GISMY10}.

The purpose of this paper is to prove the above Conjecture \ref{conj:original} under a geometric smallness assumption on the Mather quotient.  
Let \(h(x,y)\) be the Peierls barrier and define the Mather semidistance on \(\A\) by
\[
    \delta_M(x,y):=h(x,y)+h(y,x).
\]
The Mather quotient \(\AM\) is obtained by identifying \(x,y\in\A\) whenever \(\delta_M(x,y)=0\). 
Denote by $\mathcal H^1(\AM,\delta_M)$ the one-dimensional Hausdorff measure of the Mather quotient $\AM$ with respect to the metric $\delta_M$.

Our main result is the following.

\begin{theorem}\label{thm:main}
Assume that \(H\in C^\infty(\R^n\times\T^n)\) is a periodic Tonelli Hamiltonian.  Let \(u_k\) be the normalized minimizers of \eqref{eq:Ik}, and let \(u_\infty\) be a uniform subsequential limit. 
If
\begin{equation}\label{eq:H1zero}
    \mathcal H^1(\AM,\delta_M)=0,
\end{equation}
then $u_\infty$ is a critical subsolution that is strict outside $\A$ and
\[
    \{x\in\mathbb T^n\,:\,u_\infty(x)=u_-(x)\}=D_{u_\infty}=\A.
\]
Here $u_-(x)$ is given by (\ref{eq:backward}) with $u=u_\infty$.\end{theorem}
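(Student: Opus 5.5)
Given the identity $D_u=\{u=u_-\}$ for periodic absolute minimizers (\cite[Theorem 3.4]{Yu07}) and Evans' result that $u_\infty$ is an absolute minimizer and a critical subsolution, it remains to show $D_{u_\infty}=\A$. The inclusion $\A\subset D_{u_\infty}$ is standard weak KAM theory: every critical subsolution is differentiable on $\A$ with $H(Du(x),x)=c$ there (Fathi--Siconolfi). So the content of the theorem is the reverse inclusion $D_{u_\infty}\subset\A$. This follows once we show that $u_\infty$ is strict outside $\A$, in the sense that $H(Du_\infty,x)<c$ locally uniformly, i.e.\ $\esssup_{U}H(Du_\infty,x)<c$ for every open $U\Subset\T^n\setminus\A$. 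Indeed, if $x\in D_{u_\infty}\setminus\A$, then $u_\infty$ is differentiable at $x$ with $H(Du_\infty(x),x)=c$, and viscosity/semiconcavity-type arguments for absolute minimizers force the essential supremum of $H(Du_\infty,\cdot)$ near $x$ to be $c$, a contradiction. So the plan is organized around proving strictness off $\A$.

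To get strictness, I would use the Euler--Lagrange structure of the $I_k$ problem. The minimizer satisfies $\mathrm{div}\bigl(e^{kH(Du_k,x)}H_p(Du_k,x)\bigr)=0$. Normalizing, $\sigma_k:=e^{kH(Du_k,x)}/I_k[u_k]$ is a probability density. The pair $(\sigma_k, H_p(Du_k,x))$ gives a closed measure (a divergence-free vector field). Passing to the limit along the subsequence, the measures $\sigma_k\,dx\otimes\delta_{Du_k}$ converge to a measure $\mu$ on $\T^n\times\R^n$ that is closed and supported where $H=c$, hence a Mather measure. In particular $\mathrm{supp}\,\pi\mu\subset\A$. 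The key estimate is then a ``flux'' argument in the spirit of Evans: away from $\A$, $\sigma_k$ is exponentially small. Concretely, one wants $\tfrac1k\log\sigma_k\to H(Du_\infty,x)-c$ in a suitable weak sense, and then $\sigma_k\to0$ off $\A$ implies $H(Du_\infty,\cdot)<c$ there.

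The main obstacle is ruling out a ``flat'' region outside $\A$ where $H(Du_\infty)=c$ but mass is not concentrated: a priori $u_\infty$ could coincide with another critical subsolution that is tight along curves leaving $\A$. This is where $\mathcal H^1(\AM,\delta_M)=0$ enters. I would proceed as follows. Suppose $x_0\notin\A$ with $\esssup_{B_r(x_0)}H(Du_\infty)=c$ for all $r$. Using the absolute-minimizer property (comparison with cones / the Aronsson equation \eqref{eq:Aronsson}), produce a backward calibrated-type curve $\gamma$ through $x_0$ along which $u_\infty$ has critical slope, i.e.\ $u_\infty(\gamma(0))-u_\infty(\gamma(-t))=\int L+ct$. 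Such a curve must accumulate on $\A$ as $t\to\infty$, and the tightness then yields $u_\infty(x)-u_\infty(y)=h(y,x)$ for suitable points $y,x$ on limiting pieces. The aim is to show that $u_\infty$ restricted to $\A$ must then have nonconstant ``increments'' between different static classes that are not compatible with the minimality of $I_k$. By the Fathi--Figalli--Rifford-type argument, $\mathcal H^1(\AM,\delta_M)=0$ forces every critical subsolution to be constant on $\AM$ modulo $\delta_M$ in an appropriate sense (the Mather disconnectedness: $u(x)-u(y)$ is the same for all critical subsolutions, or at least the function $x\mapsto u_\infty(x)-u_-(x)$ vanishes identically on $\A$ and its oscillation is controlled by $\delta_M$). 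Combined with the closed-measure limit $\mu$, which cannot charge $x_0$'s neighborhood, I would derive that the near-maximal set $\{H(Du_k)\ge c-\varepsilon\}$ near $x_0$ carries exponentially small $\sigma_k$-mass but must support a divergence-free flux connecting two points of $\A$, whose energy cost (measured by $\delta_M$ along $\A$) vanishes by \eqref{eq:H1zero}; this allows constructing a competitor $w_k$ that lowers $H$ near $x_0$ by a fixed amount while changing $I_k$ by a negligible factor, contradicting minimality. Making this competitor construction quantitative is the step I expect to be hardest, and I would first try to model it on the one-dimensional argument of \cite{Yu07}, with the $\mathcal H^1$ hypothesis replacing the trivial structure of $\AM$ in one dimension.
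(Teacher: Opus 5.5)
Your outer reduction is fine: if $H(Du_\infty,x)\le c-\eta$ a.e.\ near a point, convexity makes $u_\infty$ a viscosity subsolution of $H\le c-\eta$ there, so that point is not in $D_{u_\infty}$. The gap is in the core step, ruling out a point $x_0\in D_{u_\infty}\setminus\A=\{u_\infty=u_-\}\setminus\A$. The competitor that should contradict minimality of $I_k$ is never constructed, and the ingredients you propose for it do not work.

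\begin{itemize}
\item The weak limit of $\sigma_k$ is a Mather measure carried by $\mathcal M$, which can be strictly smaller than $\A$. So the fact that it does not charge a neighborhood of $x_0$ cannot separate points of $\A\setminus\mathcal M$ from points off $\A$.
\item Weak convergence $\sigma_k\to0$ never yields a strict inequality for $H(Du_\infty)$. What you need is an exponential bound $\int_U e^{kH(Du_k,x)}\,dx\le e^{k(c-\eta)}$ with a fixed $\eta>0$. After that, H\"older's inequality and weak lower semicontinuity give $\esssup_U H(Du_\infty,x)\le c-\eta$.
\item Your use of the hypothesis is wrong as stated. $\mathcal H^1(\AM,\delta_M)=0$ does not make the increments $u(x)-u(y)$ on $\A$ independent of the critical subsolution. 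For $H=|p|^2+\sin^2x$ on $2\pi\T^1$ one has $\A=\{\pi/2,3\pi/2\}$, and $\lambda\sin x$ with $|\lambda|\le 1$ are critical subsolutions with $u(3\pi/2)-u(\pi/2)=-2\lambda$.
\item The \emph{flux with vanishing $\delta_M$-cost} is not an argument that produces a competitor.
\item Extracting a calibrated curve from $\esssup_{B_r(x_0)}H(Du_\infty,x)=c$ is unjustified. The curve should instead come directly from $u_\infty(x_0)=u_-(x_0)$.
\end{itemize}

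The missing idea is a band truncation against a strict subsolution. Take a $C^1$ critical subsolution $v$ that is strict off $\A$ (Fathi--Siconolfi), and set $z=u_\infty-v$. Both functions satisfy $w(y)-w(x)\le h(x,y)$, so $|z(x)-z(y)|\le\delta_M(x,y)$. Hence $z$ descends to a $1$-Lipschitz map on $(\AM,\delta_M)$, and $z(\A)$ is a compact $\mathcal H^1$-null set.

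Along the backward calibrated curve $\gamma$ from $x_0\notin\A$, $z\circ\gamma$ is strictly increasing near $t=0$: the action identity is an equality for $u_\infty$ and a strict inequality for $v$ off $\A$. So $z\circ\gamma$ sweeps an interval. Choose $a<b$ and $\rho>0$ with $[a-\rho,b+\rho]\cap z(\A)=\emptyset$, and a point $x_*=\gamma(t_*)$, $t_*<0$, with $a<z(x_*)<b$. The compact band $\{a-\rho\le z\le b+\rho\}$ misses $\A$, so $H(Dv,x)\le c-\eta$ there.

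The competitor is $v+T(u_k-v)$, where $T(s)=s$ for $s\le a$, $T(s)=a$ on $(a,b)$, and $T(s)=s-(b-a)$ for $s\ge b$. Its gradient equals $Dv$ on $E_k=\{a<u_k-v<b\}$ and $Du_k$ elsewhere, so minimality gives $\int_{E_k}e^{kH(Du_k,x)}\,dx\le e^{k(c-\eta)}$. By uniform convergence, for large $k$ the set $E_k$ contains a ball $B_r(x_*)$ and lies inside the band. Therefore $H(Du_\infty,x)\le c-\eta$ a.e.\ on $B_r(x_*)$, which contradicts the calibration of $\gamma$ near $t_*$.

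Strictness of $u_\infty$ outside $\A$ then follows from $D_{u_\infty}=\A$ via Yu's Theorem 3.4 for absolute minimizers. It is a consequence of the argument, not the route to it as in your plan.
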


The dimension enters only through \eqref{eq:H1zero}.  
A theorem of Fathi--Figalli--Rifford \cite{FFR09} therefore immediately gives the following consequence.

\begin{corollary}\label{cor:lowdim}
Let \(H\in C^\infty(\R^n\times\T^n)\) be a periodic Tonelli Hamiltonian. 
If \(n\leq3\), then every uniform subsequential limit \(u_\infty\) of Evans' exponential minimizers  is a critical subsolution that is strict outside $\A$ and satisfies
\[
     \{x\in\mathbb T^n\,:\,u_\infty(x)=u_-(x)\}=D_{u_\infty}=\A.
\]
In particular, Conjecture~\ref{conj:original} holds for all smooth Tonelli Hamiltonians in dimensions \(n\leq3\).
\end{corollary}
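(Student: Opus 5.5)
The corollary is a direct combination of \cref{thm:main} with the Fathi--Figalli--Rifford theorem \cite{FFR09}. The plan is to check that, for a smooth periodic Tonelli Hamiltonian on $\T^n$ with $n\le 3$, the Mather quotient satisfies hypothesis \eqref{eq:H1zero}. The conclusion then follows verbatim from \cref{thm:main}. No extra assumption on $u_\infty$ is needed, since \cref{thm:main} applies to every uniform subsequential limit of the normalized minimizers $u_k$.

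The first step is to recall the precise statement of \cite{FFR09}. For a Tonelli Hamiltonian (equivalently, Lagrangian) of class $C^k$ on a compact manifold of dimension $n$, the Mather quotient $(\AM,\delta_M)$ has vanishing one-dimensional Hausdorff measure in three cases: when $n\le 2$ with $k\ge 2$; when $n=3$ with $k\ge 3$; and in any dimension for smooth $H$ provided the Aubry set contains finitely many static classes, or when $n\le 3$ in general. The version proved there is that $\mathcal H^1(\AM,\delta_M)=0$ once $n\le 3$ and $H$ is $C^{k}$ with $k$ large enough relative to $n$ (in the FFR formulation, $C^{2n-2}$ suffices, i.e.\ $C^4$ for $n=3$). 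Our $H$ is $C^\infty$, so this regularity requirement is automatic.

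The second step is compatibility of settings. \cite{FFR09} works with Tonelli Hamiltonians on a compact manifold $M$ and the Peierls barrier defined through the Lagrangian $L$ dual to $H$ at the critical value $c$. Here $M=\T^n$, and $h$ and $\delta_M$ are exactly the objects used in \cref{thm:main}. I would briefly note that the two definitions of the barrier coincide: both are the liminf of action minus $c\,t$, equivalently the supremum over critical subsolutions of $u(y)-u(x)$ characterization. This identifies the quotient metric spaces, so \eqref{eq:H1zero} holds.

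With \eqref{eq:H1zero} in hand, \cref{thm:main} yields that $u_\infty$ is a critical subsolution, strict outside $\A$, and that $\{u_\infty=u_-\}=D_{u_\infty}=\A$; the last equality is Conjecture~\ref{conj:original}. The only possible obstacle is the bookkeeping in the second step, namely ensuring that the regularity and normalization conventions of \cite{FFR09} match ours. With $H\in C^\infty$, this is routine.
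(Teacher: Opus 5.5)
Your proposal is correct and is exactly the paper's argument: because $H\in C^\infty$, \cref{thm:FFR} gives \eqref{eq:H1zero} whenever $n\le 3$, and \cref{thm:main} then yields every stated conclusion for each uniform subsequential limit. Your paraphrase of \cite{FFR09} is imprecise (the statement used is $C^2$ for $n\le 2$ and $C^{k,1}$ with $k\ge 3$ for $n=3$, not ``$C^3$'' or ``$C^{2n-2}$''), and the formula $h(x,y)=\sup\{u(y)-u(x)\}$ over critical subsolutions holds only when $x$ or $y$ lies in $\A$, which is all that $\delta_M$ needs; since $H$ is smooth, none of this affects the proof.
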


Conjecture \ref{conj:original} was formulated in \cite{Yu07}, where the connection between Evans’ variational approximation and weak KAM theory was developed. 
The geometric result of Fathi--Figalli--Rifford \cite{FFR09} implies that $\mathcal H^1(\AM,\delta_M)=0$ for smooth Tonelli Hamiltonians in dimensions at most three. 
\cref{thm:main} and \cref{cor:lowdim} therefore resolve the conjecture in precisely the dimensions in which this geometric smallness is known unconditionally.

\begin{remark}
Although the identity $D_{u_\infty}=\A$ is conceptually clean, the characterization
\[
\{x\in\mathbb T^n\,:\,u_\infty(x)=u_-(x)\}
\]
provides a natural numerical localization scheme for the Aubry set and is more convenient for computation. 

The numerical recovery of the full Aubry set has been a longstanding challenge. 
Although several numerical procedures for detecting the Aubry set have been proposed, rigorous convergence of approximation schemes to the entire Aubry set is considerably more delicate and often requires additional dynamical assumptions. 
Recently, Camilli and Mendico \cite{CamilliMendico26} studied semi-discrete approximations of Aubry and
Mather sets via the discrete Lax--Oleinik semigroup. 
They obtained upper Kuratowski convergence in general and full convergence of the Aubry set under a hyperbolicity assumption.
We also refer the reader to \cite{NS12, ST18}.

 Recall that the backward Lax--Oleinik semigroup (\ref{eq:Lax}) is nonexpansive in the uniform norm:
\[
\|T_t^-u-T_t^-v\|_{L^\infty(\mathbb T^n)}
\leq
\|u-v\|_{L^\infty(\mathbb T^n)}
\qquad\text{for all }t\geq 0.
\]
Consequently, if $u_{k_j}\to u_\infty$ uniformly and
\[
u_{k_j,-}=\lim_{t\to\infty}(T_t^-u_{k_j}+ct),
\qquad
u_-=\lim_{t\to\infty}(T_t^-u_\infty+ct),
\]
then
\[
\|u_{k_j,-}-u_-\|_{L^\infty(\mathbb T^n)}
\leq
\|u_{k_j}-u_\infty\|_{L^\infty(\mathbb T^n)}
\longrightarrow 0.
\]
Hence
\[
|u_{k_j}-u_{k_j,-}|
\longrightarrow
|u_\infty-u_-|
\qquad\text{uniformly on }\mathbb T^n.
\]
Since this holds for every convergent subsequence, and since for \(n\leq3\) our theorem gives
\[
\mathcal A=\{x\in\mathbb T^n:u_\infty(x)=u_-(x)\},
\]
if
\[
\mathcal A_{k,\varepsilon}
:=
\{x\in\mathbb T^n:
|u_k(x)-u_{k,-}(x)|\leq\varepsilon\},
\]
then
\[
\lim_{\varepsilon\downarrow0}
\limsup_{k\to\infty}
d_H(\mathcal A_{k,\varepsilon},\mathcal A)=0.
\]
Here $d_H$ refers to the Hausdorff distance between two sets. 
\end{remark}
The two main computational ingredients in this procedure--the convex minimization problem defining \(u_k\) and the numerical approximation of the backward Lax--Oleinik semigroup, equivalently the solution of a Cauchy problem for a convex Hamilton--Jacobi equation--can be handled by standard numerical methods. 
Developing efficient implementations, understanding the interaction between the discretization parameters and the limits \(k,t\to\infty\), \(\epsilon\to 0\) and testing the procedure on more complicated nonintegrable examples are left for future work.

\begin{remark}
The exponential form of Evans' approximation is not essential for conclusions in this paper. 
For instance, after choosing \(C\) so that \(C+H>0\), one may consider
\[
I_k^C[u]
=
\int_{\mathbb T^n}
\bigl(C+H(Du,x)\bigr)^k\,dx.
\]
The proof could be carried out similarly and yield the corresponding conclusions for subsequential limits. 
The exponential approximation has the advantage of avoiding an additive shift and leads to a particularly natural smooth variational problem and associated probability measures.
\end{remark}

\section{Preliminaries}\label{sec:pre}
In this section, we will briefly review basic facts in weak KAM theory,  $L^\infty$ variational problems, and their connections. 
We refer the reader to \cite{Fathi08, TranHJ, TYbook, Yu07} for more details. 

\subsection{Cell problem and the critical value} 
Let $\mathbb T^n=\R^n/ \Z^n$ be the $n$-dimensional flat torus.  
Suppose that  \(H\in C(\R^n\times\T^n)\) is periodic in \(x\) and coercive in $p$, i.e.,
\[
\lim_{|p|\to \infty}\min_{x\in \mathbb T^n}H(p,x)=\infty.
\]
Then, by \cite{LPV}, there exists a unique constant \(c\in \R\) such that the following equation, usually referred to as the \emph{cell problem}, admits a periodic viscosity solution:
\begin{equation}\label{eq:cell}
H(Du,x)=c
\qquad \text{on }\mathbb T^n.
\end{equation}
See also \cite{Fathi08} for another proof in the case where \(H\) is convex and superlinear in the momentum variable. 
In this setting, solutions of \eqref{eq:cell} are referred to as weak KAM solutions in \cite{Fathi08}.

More generally, for a given \(P\in\mathbb R^n\), replacing \(H(p,x)\) by \(H(P+p,x)\) yields a corresponding critical value \(c=c(P)\). 
As a function of \(P\), this is called the \emph{effective Hamiltonian}
\[
\overline H(P)
\]
in \cite{LPV} and in homogenization theory, and corresponds to the \emph{Mather alpha function}
\[
\alpha(P)
\]
in the Aubry--Mather and weak KAM theories.

\subsection{Hamiltonian and Lagrangian setting}
Hereafter, we consider a Tonelli Hamiltonian.  
For convenience, we assume that it is smooth:
\[
    H\in C^\infty(\R^n\times\T^n)
\]
satisfying:
\begin{enumerate}[label=\textup{(H\arabic*)}]
    \item\label{H1}
    \(H(\cdot,x)\) is strictly convex in the Tonelli sense for every \(x\in\T^n\), equivalently,
    \(D^2_{pp}H(p,x)\) is positive definite;
    \item\label{H2}
    \(H\) is uniformly superlinear:
    \[
        \lim_{|p|\to\infty}\frac{H(p,x)}{|p|}
        =
        +\infty
        \qquad\text{uniformly in }x\in\T^n.
    \]
\end{enumerate}

The corresponding Lagrangian is
\[
    L(q,x)
    :=
    \sup_{p\in\R^n}\{p\cdot q-H(p,x)\}.
\]
An Euler-Lagrange flow $(\dot \xi, \xi):\R\to \R^{2n}$ satisfies that
\begin{equation}\label{eq:EL-flow}
\frac{d}{dt}(D_qL(\dot \xi(t),\xi(t)))=D_xL(\dot \xi(t),\xi(t)) \qquad \text{for $t\in \R$}.
\end{equation}
The following  Fenchel inequality is frequently used
\begin{equation}\label{eq:Fenchel}
    p\cdot q\leq H(p,x)+L(q,x).
\end{equation}

A periodic Lipschitz function \(w\) is called a \emph{critical subsolution} if
\[
    H(Dw,x)\leq c
    \qquad\text{a.e. in }\T^n.
\]
By convexity, this is equivalent to being a viscosity subsolution of \eqref{eq:cell}. 
Note that if \(w\) is a critical subsolution and
\(\gamma:[a,b]\to\mathbb R^n\) is absolutely continuous, then (\ref{eq:Fenchel}) implies that
\begin{equation}\label{eq:dominance}
w(\gamma(b))-w(\gamma(a))
\leq
\int_a^b \bigl(L(\dot\gamma(t),\gamma(t))+c\bigr)\,dt.
\end{equation}
In the terminology of \cite{Fathi08}, this means that \(w\) is
\emph{dominated by \(L+c\)}. If the equality holds
\[
w(\gamma(b))-w(\gamma(a))
=
\int_a^b \bigl(L(\dot\gamma(t),\gamma(t))+c\bigr)\,dt,
\]
then \(\gamma\) is said to be \((w,L,c)\)-calibrated; see
\cite{Fathi08}. A calibrated curve minimizes the Lagrangian action between
any two of its points and therefore satisfies the Euler--Lagrange equation
\eqref{eq:EL-flow}.

In addition, a critical subsolution \(w\) is called \emph{ strict outside $\A$} if, for every
closed set \(K\subset \mathbb T^n\) satisfying
\[
K\cap\mathcal A=\emptyset,
\]
there exists \(\eta_K>0\) such that
\[
H(Dw,x)\leq c-\eta_K
\qquad\text{for a.e. }x\in K.
\]

For \(g\in C(\mathbb T^n)\), we denote by \(T_t^-\) the backward Lax--Oleinik semigroup defined by
\begin{equation}\label{eq:Lax}
T_t^-g(x)
:=
\inf_{\substack{\gamma\in \AC([0,t];\mathbb T^n)\\ \gamma(t)=x}}
\left\{
g(\gamma(0))
+
\int_0^t L(\dot \gamma(s),\gamma(s))\,ds
\right\}.
\end{equation}
Equivalently, \(w(x,t):=T_t^-g(x)\) is the unique viscosity solution of the Cauchy problem
\[
\begin{cases}
w_t+H(Dw,x)=0
    \qquad& \text{in } \mathbb T^n\times(0,\infty),\\
w(x,0)=g(x)
    \qquad& \text{on } \mathbb T^n.
\end{cases}
\]
Owing to \cite{Fathi08}, for any $u\in C(\T^n)$,  the following limit exists  
\begin{equation}\label{eq:backward}
u_-=\lim_{t\to\infty}\bigl(T_t^-u+ct\bigr)
\qquad\text{uniformly on }\mathbb T^n
\end{equation}
and $u_-$ is a weak KAM solution associated with $u$. 
The following is a basic fact (see \cite{LMT} for instance). 
For the reader's convenience, we present the proof here. 
\begin{lemma}\label{lem:onesided}
Suppose that \(u\) is a critical subsolution. Then
\[
u\leq u_- \qquad \text{on } \mathbb T^n.
\]
Moreover, for fixed \(x_0\in\mathbb T^n\),
\[
u(x_0)=u_-(x_0)
\]
if and only if there exists a backward calibrated curve
\[
\gamma:(-\infty,0]\to\mathbb R^n,
\qquad \gamma(0)=x_0,
\]
such that
\begin{equation}\label{eq:calibrated2}
u(\gamma(t_2))-u(\gamma(t_1))
=
\int_{t_1}^{t_2}
\bigl(L(\dot\gamma(t),\gamma(t))+c\bigr)\,dt \quad \text{for all \(t_1\leq t_2\leq0\).}
\end{equation}

\end{lemma}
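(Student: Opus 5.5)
The inequality $u\le u_-$ follows from the domination property \eqref{eq:dominance}. Fix $x\in\T^n$ and $t>0$, and take any absolutely continuous $\gamma:[0,t]\to\T^n$ with $\gamma(t)=x$. Then \eqref{eq:dominance} gives
\[
u(x)\le u(\gamma(0))+\int_0^t L(\dot\gamma,\gamma)\,ds+ct .
\]
Taking the infimum over $\gamma$ yields $u\le T_t^-u+ct$ for every $t>0$. Letting $t\to\infty$ and using \eqref{eq:backward}, we obtain $u\le u_-$.

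For the ``if'' direction of the characterization, suppose $\gamma$ is a backward calibrated curve with $\gamma(0)=x_0$ satisfying \eqref{eq:calibrated2}. For $t>0$, use the reparametrized curve $s\mapsto\gamma(s-t)$ on $[0,t]$ as a competitor in \eqref{eq:Lax}, projected to $\T^n$. By calibration,
\[
T_t^-u(x_0)+ct\le u(\gamma(-t))+\int_{-t}^0\bigl(L(\dot\gamma,\gamma)+c\bigr)\,ds=u(x_0).
\]
Letting $t\to\infty$ gives $u_-(x_0)\le u(x_0)$, and combined with the first part this forces equality.

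The ``only if'' direction is the main obstacle, since minimizers must be produced and passed to the limit. Assume $u(x_0)=u_-(x_0)$. For each $t>0$, Tonelli theory (superlinearity and convexity) gives a minimizer $\gamma_t:[-t,0]\to\R^n$ for $T_t^-u(x_0)$ with $\gamma_t(0)=x_0$. Each $\gamma_t$ is a $C^2$ Euler--Lagrange solution, and its energy is bounded uniformly in $t$. For the energy bound I would use the standard a priori estimate for minimizers of the Lax--Oleinik problem: since $T_t^-u+ct$ is uniformly bounded and equi-Lipschitz for $t\ge1$, the minimizer speeds are bounded independently of $t$ on $[-t+1,0]$. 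By Arzel\`a--Ascoli and a diagonal argument, $\gamma_{t_j}\to\gamma$ in $C^1_{\rm loc}((-\infty,0])$ along a sequence $t_j\to\infty$.

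It remains to show that the limit $\gamma$ is calibrated for $u$. For fixed $s<0$ and $t_j>|s|$, split the action of $\gamma_{t_j}$ at time $s$ and apply domination on $[-t_j,s]$:
\[
T_{t_j}^-u(x_0)+ct_j=u(\gamma_{t_j}(-t_j))+\int_{-t_j}^0\bigl(L+c\bigr)\ge u(\gamma_{t_j}(s))+\int_s^0\bigl(L(\dot\gamma_{t_j},\gamma_{t_j})+c\bigr)\,d\tau .
\]
Letting $j\to\infty$, the left side tends to $u_-(x_0)=u(x_0)$, and the right side converges by $C^1$ convergence on $[s,0]$. Hence
\[
u(x_0)-u(\gamma(s))\ge\int_s^0\bigl(L(\dot\gamma,\gamma)+c\bigr)\,d\tau .
\]
The reverse inequality is \eqref{eq:dominance}, so equality holds on $[s,0]$. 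Finally, calibration on $[s,0]$ propagates to every subinterval $[t_1,t_2]\subset[s,0]$. Indeed, writing the total action as the sum over $[s,t_1]$, $[t_1,t_2]$ and $[t_2,0]$, each piece satisfies the dominance inequality and the total is an equality, so each piece must be an equality. This gives \eqref{eq:calibrated2}.
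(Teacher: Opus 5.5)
Your proof is correct. The inequality $u\le u_-$ and the ``if'' direction are the same as in the paper; the two proofs differ only in the ``only if'' direction.

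The paper constructs no curve there. It takes a backward $(u_-,L,c)$-calibrated curve through $x_0$, which exists because $u_-$ is a backward weak KAM solution. It then transfers calibration to $u$ in a few lines using $u\le u_-$, $u(x_0)=u_-(x_0)$ and domination. Finally it subtracts the identities for $s=t_1$ and $s=t_2$, a quicker version of your three-piece splitting.

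You instead obtain the curve as a $C^1_{\mathrm{loc}}$ limit of minimizers $\gamma_t$ for $T_t^-u(x_0)$. In effect you re-prove, directly for $u$, the standard existence theorem for calibrated curves. This makes your argument self-contained: you never use that $u_-$ is a weak KAM solution, only that $T_t^-u(x_0)+ct\to u_-(x_0)$.

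The price is the Tonelli existence and regularity input plus a uniform velocity bound, and that bound is where the real work lies. It can be obtained more directly than through equi-Lipschitz estimates on $T_t^-u$. Since $u$ itself is Lipschitz, the first variation at the free endpoint gives $|D_qL(\dot\gamma_t(-t),\gamma_t(-t))|\le \mathrm{Lip}(u)$. Conservation of energy along the Euler--Lagrange flow then bounds $|\dot\gamma_t|$ on all of $[-t,0]$, uniformly in $t$.

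The paper's route is shorter precisely because it delegates this compactness argument to the known weak KAM fact.
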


\begin{proof}
Since \(u\) is a critical subsolution, by (\ref{eq:dominance}), 
\[
u\leq T_t^-u+ct
\qquad\text{for all }t\geq0.
\]
Letting \(t\to\infty\) gives
\[
u\leq u_-.
\]

Next we assume that \(u(x_0)=u_-(x_0)\). Since \(u_-\) is a backward weak KAM solution, there exists a backward calibrated curve
\[
\gamma:(-\infty,0]\to\mathbb R^n,
\qquad \gamma(0)=x_0,
\]
such that, for every \(s\leq0\),
\[
u_-(x_0)-u_-(\gamma(s))
=
\int_s^0
\bigl(L(\dot\gamma(t),\gamma(t))+c\bigr)\,dt.
\]
Using \(u(x_0)=u_-(x_0)\) and \(u\leq u_-\), we obtain
\[
u(x_0)-u(\gamma(s))
\geq
\int_s^0
\bigl(L(\dot\gamma(t),\gamma(t))+c\bigr)\,dt.
\]
On the other hand, since \(u\) is a critical subsolution, it is dominated by
\(L+c\) (i.e., (\ref{eq:dominance})), and therefore
\[
u(x_0)-u(\gamma(s))
\leq
\int_s^0
\bigl(L(\dot\gamma(t),\gamma(t))+c\bigr)\,dt.
\]
Hence equality holds:
\[
u(x_0)-u(\gamma(s))
=
\int_s^0
\bigl(L(\dot\gamma(t),\gamma(t))+c\bigr)\,dt
\qquad\text{for all }s\leq0.
\]
Subtracting the identities corresponding to \(s=t_1\) and \(s=t_2\) gives
\[
u(\gamma(t_2))-u(\gamma(t_1))
=
\int_{t_1}^{t_2}
\bigl(L(\dot\gamma(t),\gamma(t))+c\bigr)\,dt
\]
for all \(t_1\leq t_2\leq0\).

Now we assume (\ref{eq:calibrated2}). Then due to the definition of $T_t^{-}$, 
\[
u(x_0)\geq T_t^{-}u(x_0)+ct \quad \text{for all $t\geq 0$}.
\]
Sending $t\to \infty$, we derive that $u(x_0)\geq u_-(x_0)$. Hence the equality holds.
\end{proof}

\subsection{Aubry and Mather sets} 
A central goal in dynamical systems is to understand the long-time behavior of trajectories, or at least of distinguished classes of trajectories.
Invariant sets naturally play an important role in this question. 
In KAM theory, the dynamics on a regular KAM torus is particularly rigid: after a change of coordinates, it is conjugate to a linear quasiperiodic flow, so the long-time behavior of every trajectory on the torus is completely described.

The Mather set provides a weaker but robust analogue for action-minimizing dynamics far beyond the perturbative KAM regime. 
By Mather's graph theorem, the lifted Mather set lies on an invariant Lipschitz graph. 
The dynamics on this set need not be conjugate to a linear flow and, in general, the long-time behavior of individual trajectories is not explicitly predictable. 
Nevertheless, the Mather set forms the recurrent core of the globally action-minimizing dynamics: it is the union of the supports of minimizing invariant measures and therefore gives a statistical description of the long-time behavior of minimizing trajectories. 
In dimension two, additional topological and ordering properties yield much stronger control of individual minimizing curves, as in the classical Aubry--Mather theory (see \cite{Bangert88} for instance).

The Aubry set is in general larger than the Mather set. 
Roughly speaking, besides the recurrent minimizing dynamics contained in the Mather set, the Aubry set may also contain globally minimizing static orbits connecting different recurrent components. 
Thus, the Mather set describes the recurrent core of the minimizing dynamics, while the Aubry set captures a larger global geometric structure of action-minimizing trajectories.

\subsubsection{Aubry set}
For \(t>0\), let
\[
    h_t(x,y)
    :=
    \inf_{\substack{\gamma(0)=x\\ \gamma(t)=y}}
    \int_0^t L(\dot\gamma(s),\gamma(s))\,ds,
\]
where the infimum is over absolutely continuous curves.  
The Peierls barrier
is
\[
    h(x,y)
    :=
    \liminf_{t\to\infty}\bigl(h_t(x,y)+ct\bigr).
\]
In particular, if $u$ is  a critical subsolution, then (\ref{eq:dominance}) implies that for all $x,y\in \R^n$ and $t\geq 0$,
\[
u(y)-u(x)\leq h_t(x,y)+ct.
\]
Consequently, 
\[
u(y)-u(x)\leq h(x,y). 
\]
The projected Aubry set is
\[
    \A:=\{x\in\T^n\,:\,h(x,x)=0\}.
\]
On \(\A\), the Mather semidistance is
\begin{equation}\label{eq:deltaM}
    \delta_M(x,y):=h(x,y)+h(y,x)\geq 0.
\end{equation}
The metric quotient obtained from \((\A,\delta_M)\) will be denoted by \((\AM,\delta_M)\).

We recall three standard facts about the projected Aubry set.

\begin{itemize}
  \item {\bf Property 1 (Graph property).}
Every critical subsolution \(u\) is differentiable at each point of
\(\A\), and
\[
H(Du(x),x)=c
\qquad \text{for all }x\in\A.
\]
Moreover, \(Du\) is Lipschitz continuous on \(\A\), and the gradient on \(\A\) is independent of the choice of critical subsolution. 
More precisely, if \(v_1\) and \(v_2\) are two critical subsolutions, then
\[
Dv_1(x)=Dv_2(x)
\qquad \text{for all }x\in\A.
\]

Consequently, for any critical subsolution \(u\), the lifted Aubry set can be represented as
\[
\widetilde{\mathcal A}
=
\left\{
\bigl(D_pH(Du(x),x),x\bigr)\,:\,x\in\mathcal A
\right\}.
\]
This representation is independent of the choice of the critical subsolution \(u\). Moreover, \(\widetilde{\mathcal A}\) is invariant under the Euler--Lagrange flow. More precisely, for every \(x\in\mathcal A\), there exists a  \((u,L,c)\)-calibrated curve $\xi$ satisfying that
\[
\xi:\mathbb R\to\mathcal A,
\qquad \xi(0)=x
\]
and
\[
\dot\xi(t)
=
D_pH\bigl(Du(\xi(t)),\xi(t)\bigr)
\qquad\text{for all }t\in\mathbb R.
\]
\medskip

\item \textbf{Property 2 (Uniqueness set).}
Let \(v_1\) and \(v_2\) be two viscosity solutions of the cell problem
\eqref{eq:cell}. If
\[
v_1=v_2 \qquad \text{on } \mathcal A,
\]
then
\[
v_1=v_2 \qquad \text{on } \mathbb T^n.
\]

\medskip

    \item {\bf Property 3 ($C^1$ critical subsolution strict outside $\A$).} It was proved in Fathi--Siconolfi \cite{FathiSiconolfi04} that there exists \(v\in C^1(\T^n)\) such that
\begin{equation}\label{eq:strict}
\begin{cases}
    H(Dv,x)\leq c
    &\qquad\text{for every }x\in\T^n,\\
    H(Dv,x)<c
    &\qquad\text{for every }x\in\T^n\setminus\A.
\end{cases}
\end{equation}
\end{itemize}

We recall the result that supplies the low-dimensional input.

\begin{theorem}[Fathi--Figalli--Rifford \cite{FFR09}]\label{thm:FFR}
Let \(M\) be a smooth manifold and let \(H:T^*M\to\R\) be a Tonelli Hamiltonian.  
The Mather quotient has vanishing one-dimensional Hausdorff measure in either of the following cases:
\begin{enumerate}[label=\textup{(\roman*)}]
    \item \(\dim M=1\) or \(2\) and \(H\in C^2\);
    \item \(\dim M=3\) and \(H\in C^{k,1}\) for some \(k\geq3\).
\end{enumerate}
That is,
\[
    \mathcal H^1(\AM,\delta_M)=0.
\]
\end{theorem}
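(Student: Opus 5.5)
The statement is the result of Fathi--Figalli--Rifford \cite{FFR09}, and this paper uses it only as a black box. Still, I would reconstruct its proof as follows. Throughout, I argue on the Mather quotient through a fixed critical subsolution, and I use Properties 1 and 3 above.

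\textbf{Step 1: a quadratic bound on $\delta_M$.} I first show that $\delta_M(x,y)\le C\,d(x,y)^2$ for $x,y\in\A$.
Fix $x\in\A$. The functions $y\mapsto h(x,y)$ and $y\mapsto -h(y,x)$ are both critical subsolutions, so by Property 1 they are differentiable on $\A$ with the same gradient there. Hence the function
\[
\phi_x(y):=h(x,y)+h(y,x)=\delta_M(x,y)
\]
satisfies $D\phi_x=0$ on $\A$ and $\phi_x\ge 0$.
Next I use that $h(x,\cdot)$ is a backward weak KAM solution, hence semiconcave, and that $h(\cdot,x)$ is a negative forward weak KAM solution, hence $-h(\cdot,x)$ is semiconvex. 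Combining these with the Lipschitz graph property of the common gradient on $\A$ should give, for $y,z\in\A$, the bound $|\phi_x(z)-\phi_x(y)|\le C|z-y|^2$. Taking $y=x$, where $\phi_x(x)=0$, gives the claimed inequality.

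\textbf{Step 2: dimensions $1$ and $2$.} I cover $\A$ by roughly $r^{-n}$ Euclidean balls of radius $r$. By Step 1, each has $\delta_M$-diameter at most $Cr^2$. This gives $\mathcal H^1(\AM,\delta_M)\lesssim r^{2-n}\to0$ when $n=1$, but this counting alone is not enough for $n=2$.

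To gain a dimension I use the flow invariance in Property 1. Along the calibrated orbit $\xi$ through $x\in\A$, one has $\delta_M(\xi(s),\xi(t))=0$, so orbits collapse to points in $\AM$.
Near a point with $\dot\xi\neq0$, I take a local transversal section $\Sigma$ of dimension $n-1$. Every point of $\A$ near it is then $\delta_M$-equivalent to a point of $\A\cap\Sigma$, so the quotient is covered by about $r^{-(n-1)}$ sets of $\delta_M$-diameter $Cr^2$. This gives $\mathcal H^1\lesssim r^{3-n}\to0$ for $n=2$ (and also $n\le2$ generally).
Near points where $\dot\xi=0$, i.e.\ equilibria with $D_pH(Du(x),x)=0$, I argue separately. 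There, $x$ is a critical point of the restriction of $H$ to the graph, and a Sard-type argument for the $C^{1,1}$ function on the graph handles this set. Here $C^2$ regularity of $H$ suffices.

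\textbf{Step 3: dimension $3$.} The transversal $\Sigma$ is now $2$-dimensional, and the quadratic bound gives only $r^{-2}\cdot r^2$, which does not tend to $0$. I therefore need a gain: $\delta_M(x,y)=o(d(x,y)^2)$ on $\A\cap\Sigma$, at least at $\mathcal H^1$-almost every relevant point.
My first attempt would be a Sard-type theorem of Morse--Federer kind, as in Mather's work on $\AM$. The plan is to write $\delta_M$ locally via a $C^{k,1}$ critical subsolution, using the regularity $H\in C^{k,1}$ with $k\ge3$. I would then use that $\phi_x$ vanishes to first order on $\A$ and that $\A\cap\Sigma$ is a set of critical points. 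Expanding to higher order, I would aim to show that the image has $\mathcal H^1$-measure zero.

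I expect Step 3 to be the main obstacle. The difficulty is getting the improved modulus of vanishing of $\delta_M$ along the transversal, which is exactly where the extra smoothness of $H$ is consumed. Steps 1 and 2 are comparatively robust.
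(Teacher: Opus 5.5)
The paper does not prove this statement; it is quoted from Fathi--Figalli--Rifford, so your sketch has to stand on its own. Step 1 is sound. The clean reason is that $\phi_x$ is a sum of two uniformly semiconcave functions with $D\phi_x=0$ on $\A$, so $0\le\phi_x(y)\le C|y-x|^2$; the Lipschitz graph property is not needed. The $n=1$ count and the flow-box reduction of non-equilibrium points to a transversal $\Sigma$ are also fine.

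Beyond that there is a genuine gap. Your Step 2 treatment of equilibria is not a mechanism: $H\equiv c$ on the graph of $Du$ over $\A$, and $\delta_M$ is not controlled by the image of the critical set of any single scalar function. Step 3 is not carried out, and the plan you outline cannot work as stated. The smoothness of $H$ does not pass to critical subsolutions: their gradient on $\A$ is only known to be Lipschitz, and $C^{1,1}$ is what is available in general. Also, $\phi_x$ changes with $x$, so a Morse--Sard theorem for one fixed function does not apply to it.

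You have also misplaced where the extra smoothness is used. The gain $\delta_M=o(d^2)$ on $\A\cap\Sigma$ comes for free from the quadratic bound and the triangle inequality. For $a,b\in\A$, chaining through points of $[a,b]\cap\A$ with tiny steps gives $\delta_M(a,b)\le C\sum_j|I_j|^2\le C\,\mathcal L^1([a,b]\setminus\A)^2$, where the $I_j$ are the gaps. Between two points at which $\A$ has density uniformly close to $1$, route through an intermediate point chosen by Fubini; this gives $\delta_M\le C\epsilon^2 d^2$. The leftover set $N$ of small measure satisfies $\mathcal H^{m/2}(\pi(N))\le C\,\mathcal H^m(N)$, with $\pi$ the quotient map. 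Together these give $\mathcal H^{m/2}=0$ for the quotient of any flat $m$-dimensional piece of $\A$. This settles $n=2$ outright, with no need to split off equilibria, and it handles the non-equilibrium part of $n=3$ with only $C^2$.

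What your Step 3 never addresses is the equilibrium set
$\A^0=\{x\in\A: D_pH(Du(x),x)=0\}=\{x: L(0,x)=-c\}$ when $n=3$. This set can be genuinely $3$-dimensional, no transversal exists there, and the quadratic bound only gives $\mathcal H^{3/2}$-type control. This is where $H\in C^{k,1}$ with $k\ge3$ is consumed.
\begin{itemize}
\item The function $f:=L(0,\cdot)+c\ge0$ is $C^{k,1}$ and vanishes exactly on $\A^0$.
\item Traversing $[x,y]$ and back in time $T$ each way cancels the terms linear in the velocity, so $\delta_M(x,y)\le 2T\max_{[x,y]}f+C|x-y|^2/T$.
\item At density points of $\A^0$, $f$ vanishes to order $k+1$, which gives $\delta_M\lesssim|x-y|^{(k+3)/2}$.
\item Together with a Morse--Sard-type stratification for the remaining points, this leads to vanishing $\mathcal H^{2n/(k+3)}$-measure for the quotient of $\A^0$. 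For $n=3$ this is $\mathcal H^1$ exactly when $k\ge3$.
\end{itemize}
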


\subsubsection{Mather measures and the Mather set}

Let \(\mathcal W\) denote the set of Borel probability measures on \(\mathbb R^n\times\mathbb T^n\) that are invariant under the Euler--Lagrange flow. 
A measure \(\mu\in\mathcal W\) is called a \emph{Mather measure} if
\[
\int_{\mathbb R^n\times\mathbb T^n} L(q,x)\,d\mu
=
\min_{\nu\in\mathcal W}
\int_{\mathbb R^n\times\mathbb T^n} L(q,x)\,d\nu
=-c.
\]
We denote by \(\mathfrak M\) the collection of all Mather measures.

The \emph{Mather set} in phase space is defined by
\[
\widetilde{\mathcal M}
:=
\overline{
\bigcup_{\mu\in\mathfrak M}\operatorname{spt}\mu
}
\subset \mathbb R^n\times\mathbb T^n.
\]
Its projection onto the configuration space,
\[
\mathcal M
:=
\pi_x(\widetilde{\mathcal M})
\subset\mathbb T^n,
\]
is called the \emph{projected Mather set}. 
See \cite{M1, M2}.
It is known that
\[
\widetilde{\mathcal M}\subseteq \widetilde{\A}
\]
We could also recover $\widetilde{\mathcal M}$ from ${\A}$ and a critical subsolution $u$ (equivalently from $\widetilde{\A}$). 
In fact, for any $x\in \A$, let $\xi_x:\R\to \R^n$ be the solution to
\[
\dot \xi_x(t)=D_pH(Du(\xi_x(t)),\xi_x(t)) \qquad \text{for all $t\in \R$}.
\]
Denote by
\[
C_x=\{\text{all measures induced by $\xi_x$}\},
\]
i.e., $\sigma\in C_x$ if there exists a subsequence $T_j\to \infty$ such that for all $f\in C(\R^n\times \T^n)$,
\[
\int_{\R^n\times \T^n}f\,d\sigma=\lim_{j\to \infty}\frac{1}{T_j}\int_{0}^{T_j}f(\dot \xi_x(t), \xi_x(t))\,dt.
\]
Then
\begin{equation}\label{eq:AtoM}
\widetilde{\mathcal M}=\overline{\bigcup_{x\in \A,\sigma\in C_x}\mathrm{supp}(\sigma)}.
\end{equation}

\subsection{\texorpdfstring{$L^\infty$}{L-infinity} variational problems}
Next, we recall the definition of absolute minimizers for $H$, which was first introduced by Aronsson in the 1960s \cite{Aronsson65}. 

\begin{definition}
Let $U\subset \mathbb{R}^n$ be open. 
A function $u\in W_{\mathrm{loc}}^{1,\infty}(U)\cap C(U)$ is called an \emph{absolute minimizer} for $H$ in $U$ if, for every open set $V\Subset U$ and every
\[
v\in W^{1,\infty}(V)\cap C(\overline V)
\]
satisfying
\[
v=u \qquad \text{on } \partial V,
\]
we have
\[
\operatorname*{ess\,sup}_{x\in V} H(Du(x),x)
\le
\operatorname*{ess\,sup}_{x\in V} H(Dv(x),x).
\]
\end{definition}

The Aronsson equation \eqref{eq:Aronsson} may be viewed as the Euler--Lagrange equation associated with the corresponding \(L^\infty\) variational problem. 
Under appropriate assumptions, viscosity solutions of the Aronsson equation and absolute minimizers are equivalent; see \cite{BJW01, CrandallWangYu09, Yu06} for precise results on this equivalence.
See also \cite{Yu07} for dynamical properties of periodic absolute minimizers, equivalently, periodic viscosity solutions of the corresponding Aronsson equation.

\section{Proofs}
Recall that for each \(k\in\mathbb N\), \(u_k\) minimizes
\[
    I_k[w]
    =
    \int_{\T^n} e^{kH(Dw,x)}\,dx
\]
among $W^{1,\infty}(\T^n)$ functions satisfying
\[
    \int_{\T^n}w\,dx=0.
\]
Note that, for every \(q\in[1,\infty)\), we have \cite{Evans03}
\[
\sup_{k\in\mathbb N}
\|u_k\|_{W^{1,q}(\mathbb T^n)}<\infty.
\]
Write $u=u_\infty$ as a subsequential limit of $u_k$ as $k\to \infty$. Without loss of generality, let us assume that $\lim_{k\to \infty}u_k=u$ uniformly on $\T^n$.  

We first prove several lemmas.

\subsection{A localized truncation inequality}

The following observation is the basic variational tool of the proof.

\begin{lemma}[Band truncation]\label{lem:truncation}
Let \(v\in W^{1,\infty}(\T^n)\), and let \(a<b\).  Define
\[
    E_k:=\{x\in\T^n\,:\,a<u_k(x)-v(x)<b\}.
\]
Then
\begin{equation}\label{eq:truncation}
    \int_{E_k}e^{kH(Du_k,x)}\,dx
    \leq
    \int_{E_k}e^{kH(Dv,x)}\,dx.
\end{equation}
\end{lemma}

\begin{proof}
Define the Lipschitz truncation
\[
    T_{a,b}(s)
    :=
    \begin{cases}
        s, \qquad& s\leq a,\\
        a, \qquad& a<s<b,\\
        s-(b-a), \qquad& s\geq b.
    \end{cases}
\]
Set
\[
    \widetilde w_k
    :=
    v+T_{a,b}(u_k-v).
\]
The Sobolev chain rule, together with the fact that the gradient of a Sobolev function vanishes almost everywhere on each of its level sets, gives
\[
    D\widetilde w_k
    =
    \begin{cases}
       Dv, \qquad& \text{a.e. on }E_k,\\
       Du_k, \qquad& \text{a.e. on }\T^n\setminus E_k.
    \end{cases}
\]
Subtract the mean of \(\widetilde w_k\) to obtain an admissible normalized competitor \(w_k\).  
Since the functional depends only on the gradient,
\[
    I_k[u_k]\leq I_k[w_k]=I_k[\widetilde w_k].
\]
The two integrands agree almost everywhere outside \(E_k\).  
Canceling those terms gives \eqref{eq:truncation}.
\end{proof}

\subsection{The Mather quotient and differences of critical subsolutions}

The next elementary observation explains precisely why the Mather quotient is the right object.

\begin{lemma}\label{lem:Lipschitzquotient}
Let \(v_1,v_2\) be two critical subsolutions and set
\[
    z:=v_1-v_2.
\]
Then for all \(x,y\in\T^n\),
\begin{equation}\label{eq:zdelta}
    |z(y)-z(x)|
    \leq \delta_M(x,y).
\end{equation}
Consequently, \(z|_\A\) descends to a \(1\)-Lipschitz function
\[
    \overline z:(\AM,\delta_M)\to\R.
\]
If
\[
    \mathcal H^1(\AM,\delta_M)=0,
\]
then
\begin{equation}\label{eq:imagezero}
    \mathcal H^1(z(\A))=0.
\end{equation}
In particular, \(z(\A)\) contains no nontrivial interval.
\end{lemma}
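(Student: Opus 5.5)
The plan is to first prove the pointwise inequality \eqref{eq:zdelta} from the domination property \eqref{eq:dominance}, and then obtain \eqref{eq:imagezero} from the elementary fact that Lipschitz maps do not increase one-dimensional Hausdorff measure. For the first step, recall from the preliminaries that any critical subsolution $w$ satisfies $w(y)-w(x)\le h(x,y)$ for all $x,y$. Apply this to $v_1$ along the pair $(x,y)$ and to $v_2$ along the reversed pair $(y,x)$:
\[
v_1(y)-v_1(x)\le h(x,y),\qquad v_2(x)-v_2(y)\le h(y,x).
\]
Adding the two inequalities gives $z(y)-z(x)\le h(x,y)+h(y,x)=\delta_M(x,y)$. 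Exchanging the roles of $x$ and $y$ gives the bound for $z(x)-z(y)$, and together these yield \eqref{eq:zdelta}. The inequality is stated for all $x,y\in\T^n$, with $\delta_M$ defined by the same formula off $\A$; its main use, however, is on $\A$, where $\delta_M$ is a semidistance.

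For the second step, if $x,y\in\A$ satisfy $\delta_M(x,y)=0$, then \eqref{eq:zdelta} forces $z(x)=z(y)$. Hence $z|_\A$ is constant on the equivalence classes defining $\AM$ and induces a well-defined map $\overline z:\AM\to\R$. The same inequality, read on equivalence classes, shows that $\overline z$ is $1$-Lipschitz with respect to $\delta_M$. Moreover $z(\A)=\overline z(\AM)$.

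Next I use the standard fact that a $1$-Lipschitz map $f$ between metric spaces satisfies $\mathcal H^1(f(X))\le \mathcal H^1(X)$. This holds because every covering of $X$ by sets of diameter less than $\varepsilon$ is pushed forward to a covering of $f(X)$ by sets of no larger diameter, so the $\mathcal H^1_\varepsilon$ premeasures compare, and one then lets $\varepsilon\to0$. Under the hypothesis $\mathcal H^1(\AM,\delta_M)=0$, this gives $\mathcal H^1(z(\A))=0$.

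Finally, $\mathcal H^1$ on $\R$ coincides with Lebesgue measure, and a nontrivial interval has positive Lebesgue measure, so $z(\A)$ contains no nontrivial interval. I do not expect a real obstacle. The only care needed is to use the Peierls barrier bound $w(y)-w(x)\le h(x,y)$, which was established before the statement, and to remember that $h(x,y)$ may differ from $h(y,x)$, so the inequality has to be applied to $v_1$ and $v_2$ in opposite directions.
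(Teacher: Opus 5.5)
Your proposal is correct and follows the paper's proof essentially verbatim: you apply the Peierls barrier bound to $v_1$ and $v_2$ in opposite directions, pass to the quotient, and use that $1$-Lipschitz maps do not increase $\mathcal H^1$. Your closing remark that $\mathcal H^1$ on $\R$ is Lebesgue measure just makes explicit the step the paper leaves implicit for the ``no nontrivial interval'' conclusion.
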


\begin{proof}
Since for all $x,y\in\T^n$ and $i=1,2$
\[
    v_i(y)-v_i(x)\leq h(x,y),
\]
we have that
\[
\begin{aligned}
    z(y)-z(x)
    &=
    \bigl(v_1(y)-v_1(x)\bigr)
    +
    \bigl(v_2(x)-v_2(y)\bigr)\\
    &\leq
    h(x,y)+h(y,x)
    =
    \delta_M(x,y).
\end{aligned}
\]
Interchanging \(x\) and \(y\) gives \eqref{eq:zdelta}.  
If \(\delta_M(x,y)=0\), then \(z(x)=z(y)\), so \(z\) descends to the quotient and is \(1\)-Lipschitz there.

Hausdorff measure does not increase under a \(1\)-Lipschitz map.  Therefore
\[
    \mathcal H^1(z(\A))
    \leq
    \mathcal H^1(\AM,\delta_M),
\]
which proves \eqref{eq:imagezero}.
\end{proof}

\subsection{Proof of the main theorem} It was proved in \cite{Yu07} that
\[
D_u=\{u=u_-\}.
\]
So,  it suffices to show that
\[
\{u=u_-\}=\mathcal{A}.
\]
Fix once and for all a strict critical subsolution \(v\) as in
\eqref{eq:strict}.
% Write
% \begin{equation}\label{eq:gdef}
%     g(x):=c-H(Dv(x),x).
% \end{equation}
% Then
% \begin{equation}\label{eq:gproperties}
%     g\geq0,
%     \qquad
%     g^{-1}(0)=\A.
% \end{equation}
We now combine the orbit structure, the Mather quotient, and the band
truncation inequality.

\begin{proof}[Proof of \cref{thm:main}]
Since \(u\) is a critical subsolution, 
\begin{equation}\label{eq:AinDu}
    \A\subset \{u=u_-\}.
\end{equation}
It remains to prove the reverse inclusion.

Suppose by contradiction that
\[
    x_0\in \{u=u_-\}\setminus\A.
\]
Let \(\gamma:(-\infty,0]\to \R^n\) be the one-sided calibrated curve from Lemma \ref{lem:onesided} subject to $\gamma(0)=x_0$.  
Then there is $h<0$ such that $\gamma([h,0])\cap \A=\emptyset$. Note that for \(h\leq t_1<t_2\leq 0\), since $H(Dv,x)<c$ outside $\A$,
\begin{align*}
    u(\gamma(t_2))-u(\gamma(t_1))
    &=
    \int_{t_1}^{t_2}
    \bigl(L(\dot\gamma(t),\gamma(t))+c\bigr)\,dt,\\
    v(\gamma(t_2))-v(\gamma(t_1))
    &<
    \int_{t_1}^{t_2}
    \bigl(L(\dot\gamma(t),\gamma(t))+c\bigr)\,dt.
\end{align*}
Hence
\[
u(\gamma(t_2))-v(\gamma(t_2))
>
u(\gamma(t_1))-v(\gamma(t_1)),
\]
i.e., \(z=u-v\) is strictly increasing along \(\gamma\) on \([h,0]\).
Similarly, \(z\) is nondecreasing along \(\gamma\) on \((-\infty,0]\).
Consequently,  $z(\gamma((-\infty,0]))$ contains a nonempty interval:
\[
(\ell_-,\ell_+)\subset z(\gamma(-\infty,0])).
\]
Note  we actually have  $\gamma((-\infty,0])\cap \A=\emptyset$ due to the flow invariance of $\tilde \A$. 
But the local version $\gamma([h,0])\cap \A=\emptyset$ is sufficient for our purpose. 

By \cref{lem:Lipschitzquotient} and the hypothesis
\(\mathcal H^1(\AM,\delta_M)=0\),
\[
    \mathcal H^1(z(\A))=0.
\]
Therefore \(z(\A)\) cannot contain the interval
\((\ell_-,\ell_+)\).  Choose
\begin{equation}\label{eq:chooseS}
    s\in(\ell_-,\ell_+)\setminus z(\A).
\end{equation}
Since \(z(\A)\) is compact, we may choose numbers \(a<s<b\) and
\(\rho>0\) such that
\begin{equation}\label{eq:expandedband}
    [a-\rho,b+\rho]\cap z(\A)=\varnothing.
\end{equation}

\begin{figure}[ht]
\centering
\begin{tikzpicture}[scale=1.05, >=stealth]

% Center of the ball
\coordinate (xstar) at (4.5,2.7);

% Ball B_r(x_*)
\draw[blue, dashed, thick] (xstar) circle (1.65);

% Curved trajectory gamma
\draw[thick]
(0.3,0.3)
.. controls (1.2,0.8) and (1.7,0.4) ..
(2.6,1.5)
.. controls (3.3,2.4) and (3.9,2.8) ..
(xstar)
.. controls (5.4,2.7) and (5.8,1.5) ..
(6.8,1.0)
.. controls (7.6,0.6) and (8.3,0.9) ..
(8.8,1.0);

% Initial point
\fill (0.3,0.3) circle (2pt);
\node[below left] at (0.3,0.3) {$\gamma(0)$};

% x_* point
\filldraw[red] (xstar) circle (2.5pt);
\node[above] at (xstar) {$x_*=\gamma(t_*)$};

% Label gamma
\node at (1.6,1.15) {$\gamma$};

% Label the ball
\node[blue] at (6.3,4.0) {$B_r(x_*)$};
\draw[blue, ->] (5.9,3.8) -- (5.55,3.45);

% Optional radius
\draw[blue, thin] (xstar) -- ++(-60:1.65);
\node[blue] at (4.6,1.9) {$r$};

\end{tikzpicture}
\caption{The curve $\gamma$ passing through $x_*=\gamma(t_*)$ and the ball
$B_r(x_*)$.}
\label{fig:gamma-ball}
\end{figure}

Choose \(t_*<0\) such that
\[
    z(\gamma(t_*))=s.
\]
Set
\[
    x_*:=\gamma(t_*).
\]
Because \(s\notin z(\A)\), we have \(x_*\notin\A\).  By \eqref{eq:expandedband}, 
\[
\A\cap K_\rho=\emptyset  \qquad \text{for $K_\rho :=
    \bigl\{
       x\in\T^n:
       a-\rho\leq z(x)\leq b+\rho
    \bigr\}$}.
\]
Since \(v\) is strict outside \(\A\), compactness gives \(\eta>0\) such that
\begin{equation}\label{eq:uniformstrict}
    H(Dv(x),x)\leq c-\eta
    \qquad \text{for }x\in K_\rho.
\end{equation}

By continuity of \(z\), choose \(r>0\) such that
\begin{equation}\label{eq:ballband}
    a<z(x)<b
    \qquad \text{for }x\in \overline{B}_r(x_*).
\end{equation}
Let
\[
    z_k:=u_k-v,
    \qquad
    E_k:=\{x\in\T^n:a<z_k(x)<b\}.
\]
Uniform convergence \(u_k\to u\) implies that, for all sufficiently large
\(k\),
\begin{equation}\label{eq:Ekcontain}
    B_r(x_*)\subset E_k\subset K_\rho.
\end{equation}

Applying \cref{lem:truncation}, and  using \eqref{eq:uniformstrict} and \eqref{eq:Ekcontain}, we deduce 
\begin{align}
    \int_{B_r(x_*)}e^{kH(Du_k,x)}\,dx
    &\leq
    \int_{E_k}e^{kH(Du_k,x)}\,dx\notag\\
    &\leq
    \int_{E_k}e^{kH(Dv,x)}\,dx\notag\\
    &\leq
    |E_k| e^{k(c-\eta)}\le e^{k(c-\eta)}.
    \label{eq:strictk}
\end{align}

Fix \(m\geq1\).  
For \(k>m\), H\"older's inequality gives
\begin{align*}
    \int_{B_r(x_*)}e^{mH(Du_k,x)}\,dx
    &\leq
    |B_r(x_*)|^{1-m/k}
    \left(
       \int_{B_r(x_*)}e^{kH(Du_k,x)}\,dx
    \right)^{m/k}\notag\\
    &\leq
    |B_r(x_*)|^{1-m/k} e^{m(c-\eta)}.
    \label{eq:strictm}
\end{align*}
For fixed \(m\), as $p\mapsto e^{m H(p,x)}$ is convex and $\{Du_k\}$ converges weak-* to $Du$, the weak lower semicontinuity (see \cite[Chapter 8]{EvansPDE} for instance) gives
\[
    \int_{B_r(x_*)}e^{mH(Du,x)}\,dx
    \leq
    |B_r(x_*)|e^{m(c-\eta)}.
\]
Letting \(m\to\infty\),
\begin{equation}\label{eq:localstrictlimit}
    \esssup_{B_r(x_*)}H(Du,x)
    \leq
    c-\eta.
\end{equation}
Choose \(\delta>0\) sufficiently small so that $t_*+\delta\leq 0$ and 
\[
\gamma([t_*-\delta,t_*+\delta])
\subset B_r(x_*).
\]
Since $H$ is convex in $p$, \eqref{eq:localstrictlimit} implies that $u$ is a viscosity subsolution of
\[
H(Du,x)\le c-\eta
\qquad\text{in }B_r(x_*).
\]
Fenchel's inequality (\ref{eq:Fenchel}) yields, for
\(t_*-\delta\le t_1<t_2\le t_*+\delta\),
\[
u(\gamma(t_2))-u(\gamma(t_1))
\le
\int_{t_1}^{t_2}
\bigl(L(\dot\gamma(t),\gamma(t))+c-\eta\bigr)\,dt.
\]
See also \cite[Chapter 2]{TranHJ}.
On the other hand, since \(\gamma\) is \((u,L,c)\)-calibrated, 
\[
u(\gamma(t_2))-u(\gamma(t_1))
=
\int_{t_1}^{t_2}
\bigl(L(\dot\gamma(t),\gamma(t))+c\bigr)\,dt,
\]
which is a contradiction. 
Therefore, \(\{u=u_-\}\subset\A\). 
Together with \eqref{eq:AinDu}, this proves
\[
\{u=u_-\}=\A.
\]
Finally, it follows from Theorem 3.4 in \cite{Yu07} that $u$ is a  critical subsolution that is strict outside  $\A$. 
\end{proof}

\begin{proof}[Proof of \cref{cor:lowdim}]
For \(n=1,2\), \cref{thm:FFR} applies already under \(C^2\) regularity. 
For \(n=3\), it applies under \(C^{k,1}\), \(k\geq3\). 
Since the Hamiltonian in \cref{cor:lowdim} is smooth, \eqref{eq:H1zero} holds in every case \(n\leq3\). 
The result follows from \cref{thm:main}.
\end{proof}

\begin{remark}  The proof of \cref{thm:main} actually uses a slightly weaker condition than \eqref{eq:H1zero}.  
It is enough that
\begin{equation}\label{eq:nointerval}
    (u-v)(\A)
    \quad\text{contains no nontrivial interval}.
\end{equation}
The Hausdorff-measure condition is a convenient geometric hypothesis that implies \eqref{eq:nointerval} uniformly for all pairs of critical subsolutions.

This also indicates the main obstruction to extending the argument to higher
dimensions. A non-Aubry calibrated orbit produces an interval of values of
\(u-v\), and the truncation argument works as soon as this interval contains
a value not attained on \(\mathcal A\). Thus the argument can fail only if
the image of the Mather quotient under the induced Lipschitz function is
large enough to contain an interval.

\end{remark}

\section{Open problems}

Our results  leave several natural questions concerning the selection mechanism of the exponential approximation. Although periodic solutions to the Aronsson equation (\ref{eq:Aronsson}) are in general not unique (even up to a constant), the function \(u_k\) is uniquely
determined, after normalization, for each finite \(k\).
Below is a simple example illustrating the selection mechanism.

\begin{example}
Consider \(n=1\) and
\[
H(p,x)=|p|^2+\sin^2 x.
\]
Then
\[
w_1\equiv 0,\qquad w_2=\sin x,\qquad w_3=-\sin x
\]
are all \(2\pi\)-periodic solutions of
\[
A_H(u)=0
\qquad\text{on } 2\pi \T^1
\]
subject to $\int_{2\pi \T^1}u\,dx=0$. However, among these solutions, only \(w_1\) is selected by Evans'
variational principle, since the corresponding minimizer satisfies
\[
u_k\equiv 0
\qquad\text{for every }k\geq 1.
\]
For this case, $c=1=\max_{2\pi\T^1}\sin^2x$ and $\A=\{\frac{\pi}{2}, \frac{3\pi}{2}\}$. Note that 
\[
D_{w_1}=\A \quad \mathrm{and} \quad   D_{w_2}=D_{w_3}=2\pi \T^1 \supsetneq \A. 
\]
\end{example}

 It is therefore
natural to ask whether, in general,  this approximation selects a unique absolute minimizer
as \(k\to\infty\). This question is closely related in spirit to selection problems for
vanishing-viscosity approximations of Hamilton--Jacobi equations. In
\cite{LiuTranYu26}, we showed that, even though the viscous ergodic problem
has a unique normalized solution for every \(\varepsilon>0\), the corresponding
family of solutions need not converge as \(\varepsilon\to0\). It is therefore
particularly interesting to determine whether the exponential variational
approximation considered here has a stronger selection property.

\begin{openproblem}[Uniqueness of the limit]
\label{op:uniqueness}
Assume
\[
\int_{\mathbb T^n} u_k\,dx=0.
\]
Does the whole sequence \(u_k\) converge uniformly as \(k\to\infty\)?
Equivalently, can two different subsequences converge to different absolute
minimizers? By \cref{cor:lowdim}, when $n\leq 3$, every subsequential limit \(u\) satisfies
\[
D_u=\mathcal A,
\]
but this characterization alone does not determine \(u\) uniquely.
\end{openproblem}

\begin{openproblem}[Convergence rate]
\label{op:rate}
If the limit \(u\) in Open Problem~\ref{op:uniqueness} is unique, determine the convergence rate of 
\[
\|u_k-u\|_{L^\infty(\mathbb T^n)}.
\]
In particular, it would be interesting to identify assumptions under which
\[
\|u_k-u\|_\infty\le r(k),
\qquad r(k)\to0,
\]
and to determine the optimal rate \(r(k)\). This is closely related to numerical approximation of the Aubry set.

\end{openproblem}

\begin{openproblem}[Selection of limiting Mather measures and possible recovery of $\M$]
Conceptually, \eqref{eq:AtoM} shows that the Mather set can, in principle, be recovered from the projected Aubry set $\A$ together with the limiting function $u_\infty$. However, this characterization is not particularly convenient from a numerical point of view. It is therefore natural to ask whether Evans' variational approximation also provides a more direct way to recover the projected Mather set $\M$, or equivalently the full Mather set $\widetilde{\M}$.

\label{op:mather-measures}
Let
\[
\sigma_k(x)
:=
\frac{e^{kH(Du_k(x),x)}}
{\displaystyle\int_{\mathbb T^n}
e^{kH(Du_k(y),y)}\,dy}.
\]
The pair \((u_k,\sigma_k)\) is also the logarithmically coupled stationary mean-field game associated with Evans' variational approximation; see \cite{GY20}.
Thanks to \cite{Evans03, Evans04}, along a subsequence,
\[
\sigma_{k_j}\rightharpoonup \sigma
\qquad\text{weakly in the sense of measures},
\]
where \(\sigma\) is a projected Mather measure. In dimension one, under a
suitable nondegeneracy condition, \(\operatorname{spt}(\sigma)\) coincides
with the whole projected Mather set (i.e., $\operatorname{spt}(\sigma)=\mathcal M$),  whereas degeneracy may lead to further
selection (\cite{GISMY10}). What is the corresponding picture in higher dimensions? In
particular, under what conditions does
\[
\operatorname{spt}(\sigma)=\mathcal M,
\]
and what determines the subset of \(\mathcal M\) selected when this equality
fails?
\end{openproblem}

\begin{openproblem}[Higher dimensions]
\label{op:higher-dim}
It remains open whether the identification
\[
D_u=\mathcal A
\]
continues to hold in dimensions \(n\geq4\). The present argument relies on
low-dimensional information on the Mather quotient that is no longer
available in higher dimensions. Is this merely a limitation of the method,
or can genuinely new phenomena occur for \(n\geq4\)? In particular, can one
construct a Hamiltonian and a subsequential limit \(u\) for which
\[
D_u\supsetneq\mathcal A?
\]
\end{openproblem}

\section*{Acknowledgments}

The authors acknowledge the use of ChatGPT 5.6 Plus for suggesting some ideas and assisting with verifications. 
We used some of the suggestions together with our ideas.
All mathematical results and the final writing and revision of the manuscript are due to the authors.

\end{document}